\newtheorem{ques}{Question}
\newcommand{\Mod}[1]{\ (\mathrm{mod}\ #1)}
\begin{document}

\title*{On distinct consecutive differences}
\author{Imre Ruzsa, George Shakan, J\'ozsef Solymosi, and Endre Szemer\'edi}
\institute{Imre Ruzsa \at Alfr\'ed R\'enyi Institute of Mathematics, Hungarian Academy of Sciences, \email{ruzsa@renyi.hu}
\and George Shakan \at Department of Mathematics at University of Oxford,
\email{shakan@maths.ox.ac.uk}
\and Jozsef Solymosi \at Department of Mathematics,
University of  British Columbia,   \email{solymosi@math.ubc.ca}
\and Endre Szemer\'edi \at Alfr\'ed R\'enyi Institute of Mathematics, Hungarian Academy of Sciences, \email{szemered@renyi.hu} }

%
%
\maketitle

\abstract{We show that if $A=\{a_1 < a_2 < \ldots < a_k\}$ is a set of real numbers such that the differences of the consecutive
elements are distinct, then for and finite $B \subset \mathbb{R}$,
$$|A+B|\gg |A||B|^{1/2}.$$ The bound is tight up to the constant.}

\section{Introduction}
\label{sec:1}
Given two sets $A, B \subset \mathbb{R}$, the {\em sumset}
of $A$ and $B$ is 
\[
A+B = \{a+b: a \in A \text{ and } b \in B\}.
\]
We say a finite set $A=\{a_1 < a_2< \cdots <a_k\}$ of real numbers
with the property that
\begin{equation}
a_i-a_{i-1}<a_{i+1}-a_i,
\end{equation}
for any $1<i<k$ is {\it convex}. There is the following conjecture of Erd\H{o}s, which motivates the current work. We use Vinogradov's notation so that $a \ll b$ means $a = O(b)$.

\begin{conjecture}\label{conv} Let $A \subset \mathbb{R}$ be convex. Then for any $\epsilon > 0$, $$|A+A| \gg_{\epsilon} |A|^{2 - \epsilon}.$$
\end{conjecture}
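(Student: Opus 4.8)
Conjecture~\ref{conv} is a well-known open problem of Erd\H{o}s, so what follows is an attack plan leading to the best presently available partial bounds rather than a complete proof. The natural first milestone is the exponent $3/2$, and it follows immediately from the theorem stated in the abstract: a convex set $A$ has strictly increasing, hence pairwise distinct, consecutive differences, so applying that theorem with $B=A$ gives $|A+A|\gg|A|\,|A|^{1/2}=|A|^{3/2}$. The same exponent is reachable by more classical means: a convex set satisfies $E(A)=\#\{(a,b,c,d)\in A^{4}:a+b=c+d\}\ll|A|^{5/2}$ up to a logarithmic factor (Konyagin, with later refinements by Iosevich, Rudnev and others), whence Cauchy--Schwarz gives $|A+A|\ge|A|^{4}/E(A)\gg|A|^{3/2}/\log|A|$; alternatively one applies the Szemer\'edi--Trotter theorem to translates of the graph of the strictly convex function $i\mapsto a_{i}$, as in Elekes--Nathanson--Ruzsa.

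To push the exponent past $3/2$ I would follow the higher-energy strategy of Schoen and Shkredov: control the higher additive energies $E_{3}(A)=\#\{(a_{1},\dots,a_{6})\in A^{6}:a_{1}-a_{2}=a_{3}-a_{4}=a_{5}-a_{6}\}$, and more generally $E_{k}(A)$, using convexity, then combine them through H\"older-type inequalities such as $|A+A|\,E_{3}(A)^{2}\gg E_{2}(A)^{3}$ (together with Pl\"unnecke--Ruzsa and Katz--Koester type estimates), which beat the bare $|A|^{4}/E_{2}(A)$ bound precisely because the energy of a convex set is spread out. Executed carefully this lifts $3/2$ to exponents such as $14/9$ and then $8/5$, with further incremental gains obtained by iterating and by invoking the Balog--Szemer\'edi--Gowers theorem alongside Freiman-type rigidity, a set with both small sumset and convex structure being severely constrained.

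A complementary ingredient I would bring in is multiplicative structure, in the spirit of the sum--product phenomenon: a convex set is far from any generalized arithmetic progression, so one expects the product set $A\cdot A$ or the ratio set $A/A$ to be large whenever $A+A$ is small, and such a dichotomy can be bootstrapped. Concretely one can iterate the inequality $|A+B|\gg|A||B|^{1/2}$ of this paper across several cleverly chosen auxiliary sets $B$ --- popular-difference sets of $A$, dilates of $A$, or pieces produced by a Balog--Szemer\'edi--Gowers refinement --- and transport the resulting gain back to $A+A$ via Pl\"unnecke--Ruzsa.

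I expect the principal obstacle to be that each of these routes meets a genuine barrier. The Szemer\'edi--Trotter input is sharp, so $3/2$ is the true ceiling of the pure incidence argument; the higher-energy refinements yield steadily diminishing returns and leave a wide gap to the conjectured $2-\epsilon$; and the sum--product route inherits the open limitations of current sum--product technology. The real crux --- and, I believe, the reason Conjecture~\ref{conv} remains open --- is that every present method uses only a ``one-dimensional'' consequence of convexity, a single Sidon-type or energy-type inequality, whereas the conjectured exponent ought to require exploiting the monotonicity of all of the difference functions $t\mapsto a_{i+t}-a_{i}$ simultaneously.
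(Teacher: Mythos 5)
You have correctly recognized that the statement labeled Conjecture~\ref{conv} is an open problem of Erd\H{o}s: the paper does not prove it (and could not), it is stated only as motivation, so there is no ``paper proof'' to compare against and no complete argument to expect from you. Judged as a survey of partial progress, your plan is essentially accurate and consistent with what the paper itself records: a convex set has distinct consecutive differences, so Theorem~\ref{main} with $B=A$ recovers the Elekes--Nathanson--Ruzsa bound $|A+A|\gg|A|^{3/2}$; the same exponent follows from the energy bound $E(A)\lesssim|A|^{5/2}$ or directly from Szemer\'edi--Trotter; and the higher-energy method of Schoen and Shkredov, which the paper reproduces for difference sets in its final section (giving $|A-A|\gtrsim|A|^{8/5}$), yields $|A+A|\gtrsim|A|^{14/9}$ and, with Shkredov's later refinement, exponent $3/2+5/74$ for sumsets --- exactly the state of the art cited in the introduction. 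Your H\"older inequality $|A+A|\,E_3(A)^2\geq E_2(A)^3$ is correct and is indeed the engine of that method. The only caveat worth flagging is the third paragraph: the suggestion of iterating $|A+B|\gg|A||B|^{1/2}$ over auxiliary sets $B$ and transporting the gain back via Pl\"unnecke--Ruzsa is not known to produce any improvement (Pl\"unnecke--Ruzsa moves in the wrong direction for lower bounds, and the paper's lower-bound construction shows $|A|^{3/2}$ is the true limit of the distinct-consecutive-differences hypothesis alone), so that route should be presented as speculative rather than as a working ingredient. With that understood, the inherent ``gap'' here is simply that the conjecture remains open, which you state plainly and correctly.
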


Conjecture~\ref{conv} asserts that the local hypothesis of being convex implies the global consequence of having a large sumset. The following example which shows that some form of the $\epsilon$ is necessary.

\begin{example}\label{squares} Let $k$ be a positive integer and $A =\{i^2 : 1 \leq i \leq k\}$. Then $A+A$ is contained in the set of integers of size $\leq 2k^2$ that can be represented as the sum of two squares. Fermat showed that such integers must have a prime factorization where all the primes equivalent to 3 modulo 4 appear to an even power. The sieve implies $$|A+A| \ll  k^2\prod_{\substack{ p \equiv 1 \Mod 4 \\ p \leq k}} (1 - 1/p) \ll k^2 / \log (k)^{1/2}.$$
\end{example}

One trivial obstruction to a sumset being small is that it is a large subset of an arithmetic progression. It is easy to see that any convex subset of an arithmetic progression has size $\ll \sqrt{n}$, which supports Conjecture~\ref{conv}. On the other hand, no such argument can establish the growth demanded by Conjecture~\ref{conv}.

 The first progress towards Conjecture~\ref{conv} is due to Hegyv\'ari
\cite{HE}, who proved that if $A$ is convex then 
$$|A+A|\gg k\frac{\log
k}{\log\log k}.$$ Hegyv\'ari's result was later improved by Elekes, Nathanson, and
Ruzsa \cite{ENR}, who showed if $A$ is convex then
\begin{equation}\label{ENR} |A+B|\gg k^{3/2}, \end{equation} for any set $B$ with $|B|=k.$ Garaev \cite{Ga} later provided a different proof in the case $B =A$.  Solymosi and Szemer\'edi \footnote{unpublished result} proved that there is a constant $c>0$ such that if $A$ is a large enough convex set of numbers then
\[
|A+A|\gg |A|^{3/2+c}.
\]

Schoen and Shkredov improved the result in \cite{SchSh} by showing that the constant $c$ in the above inequality can be arbitrarily close to $1/18$ (and $1/10$ if sumset is replaced by difference set). The current best result towards Conjecture~\ref{conv} is that $c$ can be taken arbitrarily close to $5/74$, which follows from the Schoen-Shkredov argument and a later paper of Shkredov \cite[Theorem 2]{Sh}.

We extend this result of Elekes, Nathanson and Ruzsa, \eqref{ENR}, to sets with distinct consecutive
differences. We say a set $A$ has distinct consecutive differences if for
any $1\leq i,j\leq k,$ $a_{i+1}-a_i=a_{j+1}-a_j$ implies $i=j.$

\begin{theorem}\label{main}
Let $A$ and $B$ be finite sets of real numbers. If $A$ has distinct consecutive differences,
then
\[
|A+B| \gg |A| |B|^{1/2}
\]
In particular, if $|A| = |B|$ then
\[
|A+B| \gg |A|^{3/2}
\]
\end{theorem}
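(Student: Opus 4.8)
My plan is to adapt the argument of Elekes, Nathanson and Ruzsa, replacing convexity by the hypothesis that the consecutive differences $d_i:=a_{i+1}-a_i$ are distinct. First, the ``soft'' route should be set aside: Cauchy--Schwarz gives $|A+B|\ge |A|^2|B|^2/E(A,B)$ with $E(A,B)$ the additive energy, but this is only useful if moreover $E(A,B)\ll |A||B|^{3/2}$, which need not hold — a set with distinct consecutive differences can already have $E(A,A)$ much larger than $|A|^2$ (it is $\asymp|A|^2\log|A|$ for $\{i^2\}$), and against a structured $B$ the energy can exceed $|A||B|^{3/2}$. So I would work directly with the ordering. Write $A=\{a_1<\dots<a_n\}$, $B=\{b_1<\dots<b_m\}$, $S=A+B$, $N=|S|$, and let $p(i,j)\in\{1,\dots,N\}$ be the rank of $a_i+b_j$ in the sorted sumset; for fixed $j$ this is strictly increasing in $i$. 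Telescoping over each column $j$ and summing,
\[
\sum_{i=1}^{n-1}\sum_{j=1}^{m}\bigl(p(i+1,j)-p(i,j)\bigr)=\sum_{j=1}^{m}\bigl(p(n,j)-p(1,j)\bigr)\le m(N-1).
\]

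The crux is a lower bound for the left-hand side. Here $p(i+1,j)-p(i,j)=\bigl|S\cap(a_i+b_j,\,a_i+b_j+d_i]\bigr|$ is the number of sumset elements in a window of length exactly $d_i$; since $a_i+b_\ell$ lies in that window iff $b_\ell-b_j\in(0,d_i]$ and $a_{i+1}+b_\ell$ lies in it iff $b_j-b_\ell\in[0,d_i)$, the summand is at least $\max\bigl(|B\cap(b_j,b_j+d_i]|,\,|B\cap(b_j-d_i,b_j]|\bigr)$. Summing over $j$ and then over $i$, and using that $d_1,\dots,d_{n-1}$ are \emph{distinct} positive reals, the displayed left-hand side is $\gg\sum_{i=1}^{n-1}R_B(d_i)$, where $R_B(t):=\#\{(b,b')\in B^2:|b-b'|<t\}$, so $N\gg \tfrac1m\sum_{i=1}^{n-1}R_B(d_i)$. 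As $R_B$ ranges from $m$ up to $\binom{m+1}{2}\asymp m^2$, this alone gives $|A+B|\gg|A||B|^{1/2}$ whenever the $d_i$ are not all tiny relative to the gap scale of $B$; in particular for $A=\{i^2:i\le n\}$ and $B$ an interval of length $\asymp n^2$ one computes $\sum_i R_B(d_i)\asymp m\cdot\mathrm{diam}(A)\asymp m^2$, recovering the theorem with the right order and matching the extremal example.

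The delicate case is when every $d_i$ lies below the minimum gap of $B$ (rescale it to $1$), so that $A$ sits in an interval of length $\mu:=\mathrm{diam}(A)<n-1$ and the above estimate degenerates. Here I would instead view $S=\bigcup_{j}(A+b_j)$: the supporting intervals have length $\mu$ and centres at pairwise distance $\ge 1$, so their interval graph has clique number $\le\mu+1$; interval graphs being perfect, $\{1,\dots,m\}$ partitions into $\le\mu+1$ classes on each of which the translates are disjoint, whence $N\ge nm/(\mu+1)$, which is $\gg|A||B|^{1/2}$ once $\mu\lesssim|B|^{1/2}$. For $\mu\gtrsim|B|^{1/2}$ one must exploit distinctness once more: a set of $n$ points whose consecutive gaps are distinct and all smaller than $1$ cannot concentrate in boundedly many residue classes modulo any $t$ (two consecutive gaps summing to a multiple of $t$ would, after a second such coincidence, force a repeated gap), so the pairwise overlaps $r_{A-A}(b_\ell-b_j)=\bigl|(A+b_j)\cap(A+b_\ell)\bigr|$ of the translates cannot all be large; bounding $\sum_{j<\ell}r_{A-A}(b_\ell-b_j)=\tfrac12\bigl(E(A,B)-nm\bigr)$ via the combinatorics of the difference sequence should yield $N\gg nm$ in this subcase. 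The step I expect to be the main obstacle is exactly this last one — making the window estimate of the second paragraph and the clustered analysis interlock so as to lose only an absolute constant (rather than logarithmic factors) and to cover uniformly the whole range of $|B|/|A|$.
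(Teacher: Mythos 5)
Your set-up (ranks $p(i,j)$ in the sorted sumset, windows $(a_i+b_j,\,a_{i+1}+b_j]$) is close in spirit to the paper's proof, but the argument does not close, and the gap is not merely technical. Your first estimate $N\gg \frac1m\sum_i R_B(d_i)$ only gives the theorem when a typical window of length $d_i$ captures $\gg |B|^{1/2}$ elements of $B$, and the complementary situation is much broader than ``all $d_i$ below the minimum gap of $B$'' (e.g.\ $B$ with gaps on several scales defeats both your first estimate and the interval-graph colouring step, so your two cases do not exhaust the possibilities). More seriously, the final subcase you leave open ($\mu\gtrsim |B|^{1/2}$, to be handled by bounding the translate overlaps and concluding $N\gg nm$) is exactly where the extremal configuration lives, and the target $N\gg nm$ is false there. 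Take the paper's tightness construction: $S$ a Sidon set in $(0,1/2)$, $A=\{i+s_i : 1\le i\le k\}$ obtained from an Eulerian circuit, and $B=\{2,4,\dots,2k\}$. All consecutive differences of $A$ are distinct and lie in $(1/2,3/2)$, hence below the minimum gap $2$ of $B$, and $\mathrm{diam}(A)\asymp k\gg |B|^{1/2}$, yet $|A+B|\le 3k|S|\ll k^{3/2}\ll nm$. Consequently $E(A,B)\ge (nm)^2/|A+B|\gg k^{5/2}$, so the pairwise overlaps $\sum_{j<\ell}r_{A-A}(b_\ell-b_j)$ are genuinely huge: no combinatorial control of the difference sequence can make the translates of $A$ nearly disjoint in this regime, so the step you flag as the ``main obstacle'' is in fact a dead end as formulated.

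What is missing is the paper's key idea, which avoids any case analysis: do not try to force each window to contain many sumset elements (in the example above a typical window contains $O(1)$ of them). Instead, set $J_b(i)=(a_i+b,\,a_{i+1}+b]\cap S$ with $S=A+B$, and use the distinct-differences hypothesis to show these sets are pairwise distinct: from $J_b(i)$ one recovers the endpoints $a_{i+1}+b=\max J_b(i)$ and $a_i+b$ (the $S$-predecessor of $\min J_b(i)$), hence $d_i$, hence $i$ and then $b$. By pigeonhole, for each fixed $b$ at least $\gg k$ of the $J_b(i)$ have size at most $100|S|/k$, giving $\gg k|B|$ distinct ``short'' sets in total; but each such set is a run of at most $100|S|/k$ consecutive elements of $S$, and there are only $\ll |S|^2/k$ such runs. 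Comparing, $k|B|\ll |S|^2/k$, i.e.\ $|A+B|\gg |A||B|^{1/2}$, uniformly in the geometry of $B$. Your rank/window framework can be salvaged by replacing the lower bound on each increment with this injection-plus-counting argument.
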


The basic idea behind the proof is the following. The sumset $A+B$
consists of $|B|$ translates of $A$. The translates of two
consecutive elements of $A$ are typically not ``far" from each
other in the sumset $A+B$. Also, from a translate of two
consecutive elements, $b+a_i$,$b+a_{i+1}$ we can recover the value
of $b$, since all of the consecutive differences are distinct.
Then the number of ``close" pairs in $A+B$ should be large, around
$|A||B|$, therefore $A+B$ is also large.

In the second part of the paper we extend Theorem~\ref{main} for two sets.
As an application we show that for any convex function $F$, and
finite sets of real numbers, $A,B,$ and $C$, if $|A|=|B|=|C|=n,$
then $\max\{|A+B|,|F(A)+C|\geq cn^{5/4}.$

Along the same lines of the proof, one can prove a statistical
analog of Theorem~\ref{main} which we state without working out the details of the proof.

\begin{theorem}
 Let $A=\{a_1 < a_2, \cdots <a_k\} \subset \mathbb{R}$. Suppose the set of consecutive differences
$$D=\{a_{i+1}-a_i : 1\leq i \leq k-1\},$$ is large, that is $|D|\geq \delta
|A|$. Then for any finite $B \subset \mathbb{R}$,
$$|A+B|\gg_{\delta} |A||B|^{1/2}.$$
\end{theorem}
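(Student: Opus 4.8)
\noindent The plan is to run the proof of Theorem~\ref{main} almost verbatim, replacing the full set of consecutive differences by a large subsystem on which the differences genuinely are distinct. For each value $d\in D$ I would fix a single index $i$ with $a_{i+1}-a_i=d$, and let $I\subseteq\{1,\dots,k-1\}$ be the set of these indices, so that $|I|=|D|\ge\delta k$ and the map $i\mapsto a_{i+1}-a_i$ is injective on $I$. The key feature of this choice is the \emph{recovery property}: if $i\in I$, then from the ordered pair $(b+a_i,\,b+a_{i+1})$ one reads off the difference $a_{i+1}-a_i$, which determines $i$, and then $b=(b+a_i)-a_i$; thus $(b,i)\mapsto(b+a_i,b+a_{i+1})$ is injective on $B\times I$.

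Next I would set $S=A+B$, $m=|S|$, list $S$ in increasing order, and for $b\in B$ let $\pi_b(i)$ be the rank of $b+a_i$ in $S$. Since translation is order preserving, $\pi_b$ is strictly increasing, so the rank gaps $g_b(i):=\pi_b(i+1)-\pi_b(i)$ are all at least $1$ and satisfy $\sum_{i\in I}g_b(i)\le\sum_{i=1}^{k-1}g_b(i)=\pi_b(k)-\pi_b(1)\le m$. Summing over $b\in B$ gives $\sum_{b\in B,\,i\in I}g_b(i)\le m|B|$. Call a pair $(b,i)\in B\times I$ \emph{good} if $g_b(i)\le T$, where $T:=\lceil 2m/(\delta k)\rceil$ (note $T\ge 1$ since $m\ge|A|=k$). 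By Markov's inequality at most $m|B|/T\le\tfrac12\delta k|B|$ pairs fail to be good, so there are at least $|B|\,|I|-\tfrac12\delta k|B|\ge\tfrac12\delta k|B|$ good pairs.

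I would then map each good pair into $S\times S$ by $(b,i)\mapsto(b+a_i,b+a_{i+1})$. This is injective by the recovery property, and a good pair lands in $\mathcal P_T:=\{(s,s')\in S\times S:\ 1\le\mathrm{rank}(s')-\mathrm{rank}(s)\le T\}$, and $|\mathcal P_T|\le mT$ because for each of the $T$ admissible rank gaps there are at most $m$ such pairs. Comparing the counts yields $\tfrac12\delta k|B|\le mT\le 2m^2/(\delta k)+m$, and a short case split according to whether or not $m\le\tfrac14\delta k|B|$ shows in both cases that $m\gg\delta k|B|^{1/2}$, that is $|A+B|\gg_\delta|A||B|^{1/2}$ with implied constant linear in $\delta$ (and improvable by optimizing $T$).

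The step that requires genuine care is the injectivity of $(b,i)\mapsto(b+a_i,b+a_{i+1})$ after restricting to $I$: one must observe that it is distinctness of the differences $\{a_{i+1}-a_i:i\in I\}$, rather than of all consecutive differences, that is being invoked, and that this is legitimate exactly because every pair one counts has its second coordinate in $I$. Beyond that the argument is the same averaging as in Theorem~\ref{main}, now made quantitative in the single parameter $\delta$ through the threshold $T$; tracking that dependence is the only bookkeeping involved.
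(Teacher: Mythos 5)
Your proposal is correct and is exactly the adaptation the paper has in mind: the paper states this statistical analogue without proof, noting only that it follows ``along the same lines'' as Theorem~\ref{main}, and your restriction to one index per distinct difference (regaining injectivity of $(b,i)\mapsto(b+a_i,b+a_{i+1})$ on $B\times I$) together with the rank-gap/Markov counting is precisely the proof of Theorem~\ref{main} with the quantitative dependence on $\delta$ tracked through the threshold $T$. Nothing further is needed.
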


Parts of this work was available earlier in unpublished manuscripts, so it received some references in further works, such as  \cite{SchSh,LiRo,LiSha}.

\section{Distinct consecutive differences}
\label{sec:2}

\subsubsection{Proof of Theorem \ref{main}}

\begin{proof} Let $$A = \{ a_1 < \cdots < a_k\},$$ and 
$$S = A+B = \{c_1 < \cdots < c_r\}.$$
Thus $r=|A+B|$. Fix $b \in B$ and $1 \leq i \leq k-1$ and set 
$$J_b(i) : = (a_i +b , a_{i+1} + b] \cap S .$$
Thus the $J_b(i)$ are disjoint as $i$ varies and 
\begin{equation}\label{first} \sum_{i=1}^{k-1} |J_b(i)| \leq |A+B|\end{equation}
We now say $J_b(i)$ is {\it good} if $|J_b(i)| \leq 100 |A+B| k^{-1}$. By \eqref{first} and pigeon-hole, we see that for a fixed $b$, the number of good $J_b(i)$ is $\gg k$. Thus the total number of good $J_b(i)$ is $\gg k |B|$. 
Furthermore the sets $J_{b}(i)$ are distinct. Indeed, given the interval $(a_i + b , a_{i+1} +b]$, we can recover $a_{i+1} - a_i$ by subtracting the two end points. Since $A$ has distinct consecutive differences, this allows as to recover $a_i , a_{i+1}$ and then $b$. 

On the other hand, the number of choices of $\leq 100 |A+B| k^{-1}$ consecutive elements in $S$ is 
$$\ll |A+B|^2 k^{-1},$$ which is also an upper bound for the number of good $J_b(i)$. Combining our upper and lower bounds for the number of good $J_b(i)$, we find
$$k|B| \ll |A+B|^2 k^{-1},$$ as desired. \qed
\end{proof}

\subsubsection{Distinct pairs of consecutive differences}
For an application, it is useful to extend Theorem~\ref{main} to a more general setting. Let 
$$A = \{a_1 < a_2< \cdots <  a_k\},$$ and 
$$A' = \{a'_1 < a'_2< \cdots < a'_k\},$$ be nonempty sets of real numbers. For $1 \leq i \leq k-1$, let 
$$d_i = a_{i+1}-a_i , \ \ \  d'_i = a'_{i+1}-a'_i.$$
The sets $A$ and $A'$ have {\em distinct pairs of consecutive differences} if the ordered pairs $\left(d_i,d'_{i}\right)$ are distinct.

\begin{theorem}\label{pairs}
Let $A$ and $A'$ be finite sets of real numbers with $k$ elements and distinct pairs of consecutive differences.  Let $B$, and $B'$ be arbitrary finite sets of real numbers. Then 
\[
|A+B|\cdot|A'+B'| \gg \left( k^3|B||B'|\right)^{1/2}.
\]
If $k = |A| = |A'| = |B| = |B'|,$ then
\[
|A+B|\cdot|A'+B'| \gg k^{5/2}.
\]
\end{theorem}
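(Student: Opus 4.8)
The plan is to mimic the proof of Theorem~\ref{main}, but to run the interval argument in the two sumsets $S = A+B$ and $S' = A'+B'$ \emph{simultaneously}, indexed by the common parameter $i$. Writing $S = \{c_1 < \cdots < c_r\}$ and $S' = \{c'_1 < \cdots < c'_{r'}\}$ with $r = |A+B|$, $r' = |A'+B'|$, I would fix a pair $(b,b') \in B \times B'$ and for each $1 \le i \le k-1$ set
\[
J_{b}(i) = (a_i + b,\, a_{i+1}+b] \cap S, \qquad J'_{b'}(i) = (a'_i + b',\, a'_{i+1}+b'] \cap S'.
\]
As in the one-set case, for fixed $(b,b')$ the sets $J_b(i)$ are pairwise disjoint across $i$ and likewise the $J'_{b'}(i)$, so $\sum_i |J_b(i)| \le r$ and $\sum_i |J'_{b'}(i)| \le r'$. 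Call the index $i$ \textbf{good} (for this $(b,b')$) if $|J_b(i)| \le 100\, r\, k^{-1}$ \emph{and} $|J'_{b'}(i)| \le 100\, r'\, k^{-1}$. Each of the two size conditions fails for at most $k/100$ values of $i$ by Markov/pigeonhole, so at least $(1 - 2/100)k \gg k$ indices $i$ are good. Summing over all $(b,b') \in B \times B'$, the total number of good triples $(b,b',i)$ is $\gg k\,|B|\,|B'|$.

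The crucial injectivity step is the reason the hypothesis is ``distinct \emph{pairs} of consecutive differences'': from the data of the interval pair $\bigl((a_i+b, a_{i+1}+b],\, (a'_i+b', a'_{i+1}+b']\bigr)$ I recover the difference $d_i = a_{i+1}-a_i$ and $d'_i = a'_{i+1}-a'_i$ by subtracting endpoints; since the ordered pairs $(d_i, d'_i)$ are distinct as $i$ ranges over $1,\dots,k-1$, this pins down $i$, hence $a_i, a'_i$, and then $b = (a_i+b) - a_i$ and $b' = (a'_i+b')-a'_i$. So the map sending a good triple $(b,b',i)$ to the pair of interval-blocks $(J_b(i), J'_{b'}(i))$ is injective. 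On the counting side, the number of choices of a block of $\le 100\,r\,k^{-1}$ consecutive elements of $S$ is $\ll r^2 k^{-1}$, and similarly $\ll r'^2 k^{-1}$ for $S'$; multiplying, the number of possible pairs of blocks is $\ll r^2 r'^2 k^{-2}$. Combining with the lower bound gives $k\,|B|\,|B'| \ll r^2 r'^2 k^{-2}$, i.e. $(rr')^2 \gg k^3 |B||B'|$, which is exactly $|A+B|\cdot|A'+B'| \gg (k^3 |B||B'|)^{1/2}$; the displayed special case follows by plugging in $|B|=|B'|=k$.

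I do not expect a genuine obstacle here, since the argument is a direct product-version of the proof of Theorem~\ref{main}; the only point requiring a little care is the bookkeeping for ``good'', where one must take the union of the two bad sets (each of density $\le 1/100$) rather than intersecting, so that a positive proportion of $i$ remains good for \emph{every} $(b,b')$. One should also note the trivial cases (e.g. $k=1$, or $A+B$ or $A'+B'$ being a single point) are handled either vacuously or by adjusting constants, and that the constant $100$ is immaterial and can be replaced by any sufficiently large absolute constant. The statistical analog in Theorem~\ref{pairs}'s spirit—replacing ``distinct pairs'' by ``the set of pairs $(d_i,d_i')$ has size $\ge \delta k$''—would follow the same way with the injectivity replaced by bounded multiplicity, losing only a factor depending on $\delta$.
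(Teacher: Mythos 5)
Your proposal is correct and follows essentially the same argument as the paper: the paper merely packages your pair of blocks $(J_b(i), J'_{b'}(i))$ as a single box $J_{b,b'}(i) \subset S \times S'$ and calls it good when both projections are short, but the pigeonhole count $\gg k|B||B'|$, the injectivity via distinct pairs $(d_i,d'_i)$, and the $\ll |S|^2|S'|^2k^{-2}$ upper bound on admissible blocks are identical. No substantive difference to report.
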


The proof is a two dimensional version of the proof of Theorem~\ref{main}, which we now give. 

\begin{proof}
Let 
$$S = A+B  = \{c_1 < \cdots < c_r\},$$
and 
$$S' = A' + B' = \{c'_1 < \cdots < c'_{r'}\}.$$
Fix $b \in B$ and $b' \in B'$ and $1 \leq i \leq k-1$ and set $$J_{b,b'}(i) = (a_i +b , a_{i+1}+b] \cap S \times (a'_{i}+b'  , a'_{i+1}+b'] \cap S' \subset S \times S'.$$
We let $\pi_1 : \mathbb{R}^2 \to \mathbb{R}$ be projection onto the first coordinate and $\pi_2$ be projection to the second. We say $J_{b,b'}(i)$ is {\em good} if 
$$|\pi_1( J_{b,b'}(i))| \leq 100 |A+B| k^{-1}, \ \ \ |\pi_2( J_{b,b'}(i))| \leq 100 |A'+B'| k^{-1}.$$
For a fixed $b \in B$ and $b' \in B'$,
$$\sum_{i=1}^{k-1} |\pi_1( J_{b,b'}(i))| \leq |A+B| ,  \ \ \ \sum_{i=1}^{k-1} |\pi_2( J_{b,b'}(i))| \leq |A'+B'|.$$
Thus by pigeon-hole, the number of good $J_{b,b'}(i)$ is $\gg k$. Allowing $b$ and $b'$ to vary, we conclude the total number of good $J_{b,b'}(i)$ is $\gg k |B| |B'|$. Furthermore, the sets $J_{b,b'}(i)$ are distinct. Indeed given intervals $(a_i +b , a_{i+1}+b]$ and $(a'_{i}+b'  , a'_{i+1}+b'] $, we can recover $a_{i+1} - a_i$ and $a'_{i+1} - a'_{i}$. Since $A$ and $A'$ have distinct pairs of consecutive differences, we may recover $a_i , a_{i+1} , a'_i , a'_{i+1}$ and then $b,b'$. 

On the other hand, the number boxes $I \times I'$ in  $S \times S'$ satisfying 
$$|I \leq 100 |S|k^{-1} , \ \ \ |I'| \leq 100 |S'|k^{-1},$$ is $\ll |S|^2 |S'|^2 k^{-2}$. Thus the number of good $J_{b,b'}(i)$ is $\ll |S|^2 |S'|^2 k^{-2}$. Combining our lower and upper bounds for the number of good $J_{b,b'}(i)$ we find $$k |B||B'| \ll |A+B|^2 |A'+B'|^2 k^{-2},   $$
which completes the proof. \qed
\end{proof} 

A simple consequence of Theorem \ref{pairs} is the following result, which
was first proved by Elekes, Nathanson, and Ruzsa \cite{ENR}.

\begin{theorem}\label{convex}
 For any strictly convex real function $F$, and finite sets of
real numbers, $A,B,$ and $C$, if $|A|=|B|=|C|=k,$ then
$$\max\{|A+B|,|F(A)+C|\gg k^{5/4}.$$ In particular, 
$$|A+F(A)|\gg k^{5/4}.$$
\end{theorem}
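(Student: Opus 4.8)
The plan is to deduce this from Theorem~\ref{pairs} by choosing the two sets with distinct pairs of consecutive differences to be $A$ itself and $A' = F(A)$. First I would order $A = \{a_1 < a_2 < \cdots < a_k\}$; since $F$ is strictly convex and hence strictly monotone on the relevant range (after possibly replacing $F$ by $-F$, which does not affect $|F(A)+C|$), the image points $F(a_1), \ldots, F(a_k)$ are listed in monotone order, so with $A' = F(A)$ we have $a'_i = F(a_i)$ (or $F(a_{k+1-i})$ in the decreasing case — harmless). The key observation is that $A$ and $A' = F(A)$ have \emph{distinct pairs of consecutive differences}: the pair $(d_i, d'_i) = (a_{i+1}-a_i,\ F(a_{i+1})-F(a_i))$ determines the slope $d'_i/d_i$ of the secant line of $F$ through $a_i$ and $a_{i+1}$, and by strict convexity these secant slopes are strictly increasing in $i$; hence even the single coordinate $d'_i/d_i$ already separates the indices, so certainly the ordered pairs $(d_i,d'_i)$ are pairwise distinct.

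With that in hand, I would apply Theorem~\ref{pairs} with this $A$, this $A'=F(A)$, $B' = C$, and $B = B$, all of size $k$. That gives
\[
|A+B|\cdot|F(A)+C| \gg k^{5/2}.
\]
Therefore $\max\{|A+B|,\ |F(A)+C|\}^2 \gg |A+B|\cdot|F(A)+C| \gg k^{5/2}$, so $\max\{|A+B|,\ |F(A)+C|\} \gg k^{5/4}$, which is the first claim. The ``in particular'' statement follows by taking $B = F(A)$ and $C = A$: then both quantities $|A+B|$ and $|F(A)+C|$ equal $|A + F(A)|$, so $|A+F(A)| \gg k^{5/4}$.

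I do not expect a serious obstacle here; the only point requiring a little care is the reduction to the monotone case and the verification that strict convexity forces the secant slopes $\bigl(F(a_{i+1})-F(a_i)\bigr)/(a_{i+1}-a_i)$ to be strictly increasing — this is the standard three-chords (or three-slopes) lemma for convex functions, and it is exactly the ingredient that converts the hypothesis ``$F$ strictly convex'' into the combinatorial hypothesis ``distinct pairs of consecutive differences'' needed to invoke Theorem~\ref{pairs}. Everything else is a direct substitution into the already-proven theorem together with the elementary inequality $\max\{x,y\} \geq (xy)^{1/2}$.
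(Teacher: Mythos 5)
Your proposal is essentially the paper's own proof: the paper simply observes that $A$ and $F(A)$ have distinct pairs of consecutive differences, applies Theorem~\ref{pairs}, and sets $B=F(A)$, $C=A$ for the second claim; your secant-slope verification and the inequality $\max\{x,y\}\ge (xy)^{1/2}$ supply exactly the details the paper leaves implicit. One caveat: your reduction to the monotone case is not right as stated. A strictly convex $F$ need not be monotone on the range of $A$ (and replacing $F$ by $-F$ neither makes it monotone nor preserves $|F(A)+C|$), and in the strictly decreasing case your ``harmless'' re-indexing is not harmless: listing $F(A)$ in increasing order pairs the gap $d_i$ of $A$ with a gap of $F(A)$ coming from the opposite end of $A$, and such pairs need not be distinct. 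The standard fix, also elided in the paper, is to pass to the larger of the two pieces of $A$ on which $F$ is monotone (it has at least $k/2$ elements), and in the decreasing case to apply Theorem~\ref{pairs} to $A$ and $-F(A)$ with $B'=-C$, noting $|(-F(A))+(-C)|=|F(A)+C|$.
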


\begin{proof}
The two sets, $A$ and $F(A),$ have distinct pairs of consecutive
differences. For the second inequality set $B=F(A)$ and $C=A.$ \qed
\end{proof}

Based on the work of Schoen and Shkredov \cite{SchSh},  Li and Roche-Newton \cite{LiRo} improved the bound in Theorem \ref{convex}. 

\section{A construction for the lower bound}
In this section we show that the bound in Theorem \ref{main} is tight up to a constant multiplier.

 Let $S$ be a Sidon set, that is a set for which all the nonzero differences are distinct. Suppose further that $|S|$ is odd and let us choose the elements of $S$ to be positive and also satisfying
 $$\max_{s\in S}{(s)}< 1/2,$$ 
 for all $s\in S.$ Then there is a list $L$ of the elements of
$S$ with repetitions consisting of $k= 2{|S| \choose 2}$ elements,
such that the consecutive elements have distinct differences.
($L=(s_1,s_2,\ldots , s_k)$ where $s_{i+1}-s_i=s_{j+1}-s_j$
implies that $i=j.$) Indeed, we may follow a directed eulerian circuit in the 
complete graph $K_{|S|}$ where the vertices are labeled by the elements of $S$.

Now we are ready to define $A$ which is the sumset of $S$ and $[k]$:
$$A=\{i+s_i : 1\leq i \leq k\}$$

The set $A$ has the property that the consecutive differences are
distinct, as they are of the form $1+(s_{i+1}-s_i).$ Let us set $B=[k]$ so that $|A| = |B|$. Then 
$$A+B\subset [2k]+S,$$ and so
\[
|A+B|\leq 2k|S|\ll |A|^{3/2}.
\]

Note in the above example $B$ has a much different structure than $A$. This motivates the following question.

\begin{ques} How small can $|A+A|$ be for sets $A$ of size $k$ with distinct consecutive differences?
\end{ques}

\section{Convex sets and $|A+A -A|$}\label{3}

In this section we provide a simple argument that shows a convex set cannot have additive structure.

\begin{proposition}\label{conv1} 
Suppose $A$ is convex and $A' \subset A$. Then 
\begin{equation}\label{33} |A' + A -A| \gg |A'| |A|.\end{equation}
In particular
$$|A-A| |A+A| \gg |A|^{3}.$$
\end{proposition}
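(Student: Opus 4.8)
The plan is to mimic the interval-counting idea from the proof of Theorem~\ref{main}, but now ``anchored'' at elements of $A'$ rather than at a translating set $B$. Write $A = \{a_1 < \cdots < a_k\}$ and let $S = A' + A - A$. For a fixed $a' \in A'$ and each $1 \le i \le k-1$, consider the interval
\[
J_{a'}(i) := \bigl( a' + a_i - a_k,\ a' + a_{i+1} - a_k \bigr] \cap S,
\]
or more symmetrically the family of intervals $(a' + a_i - a_j,\ a' + a_{i+1} - a_j]$ as $j$ ranges as well. The point is that $a' + a_i - a_j$ and $a' + a_{i+1} - a_j$ both lie in $A' + A - A = S$, so each such interval has at least one point of $S$ in it, hence $|J| \ge 1$. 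As in Theorem~\ref{main}, for fixed $a'$ the intervals $(a'+a_i - a_k, a'+a_{i+1}-a_k]$ are pairwise disjoint, so $\sum_i |J_{a'}(i)| \le |S|$, and by pigeonhole a positive proportion of the $i$ give ``good'' intervals with $|J_{a'}(i)| \le 100 |S| k^{-1}$; summing over $a' \in A'$ yields $\gg |A'| k$ good intervals in total.

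The key point is distinctness: the interval $(a' + a_i - a_k,\ a' + a_{i+1} - a_k]$ determines its length $a_{i+1} - a_i$, and here we use that $A$ convex implies $A$ has distinct consecutive differences, so the length recovers the index $i$ and hence $a_i, a_{i+1}$; the left endpoint then recovers $a'$. Therefore the good intervals are distinct, and each is a block of $\le 100|S| k^{-1}$ consecutive elements of $S$, of which there are $\ll |S|^2 k^{-1}$. Combining, $|A'| k \ll |S|^2 k^{-1}$, i.e. $|A' + A - A|^2 \gg |A'| k^2$ — but this only gives $|A'+A-A| \gg |A|\,|A'|^{1/2}$, which is weaker than \eqref{33}.

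To reach the full strength $|A'+A-A| \gg |A'|\,|A|$ I expect one needs a genuinely two-parameter count, using both indices $i$ and $j$: look at all pairs $(i,j)$ and the intervals $(a' + a_i - a_j,\ a'+a_{i+1}-a_j]$. The left endpoints $a' + a_i - a_j$ run over $A'+A-A$; for this family one wants the \emph{pair} (length, left endpoint) to determine $(a', i, j)$ — length gives $i$ by convexity, and then one must pin down $j$ and $a'$ from the left endpoint. That last identification is the main obstacle, since $a' - a_j$ is not uniquely represented in general; the fix should be to restrict $j$ to a suitable range (e.g. the bottom third of $A$), exploit convexity to control how the intervals nest, and count ``good'' configurations weighted by how many elements of $S$ each block of consecutive elements can absorb, so that the total good count is $\gg |A'|\,k^2$ against the $\ll |S|^2 k^{-1} \cdot k$ available blocks-with-multiplicity. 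Once \eqref{33} is established, the ``in particular'' statement follows by taking $A' = A$ and noting $|A+A-A| \le |A-A|\,|A+A| / |A|$ is false in general, so instead one argues $|A-A|\cdot|A+A| \ge |A+A-A|\cdot|A| \gg |A|^3$ via the Plünnecke–Ruzsa / Ruzsa triangle-type inequality $|A+A-A|\,|A| \le |A+A|\,|A-A|$ (Ruzsa's inequality $|X||Y+Z| \le |X+Y||X+Z|$ with $X = A$, $Y = A$, $Z = -A$).
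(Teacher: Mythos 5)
Your proposal does not establish the proposition, and it misses the two ingredients the paper actually uses. For the main bound \eqref{33} you yourself only reach $|A'+A-A|\gg |A|\,|A'|^{1/2}$, and the promised upgrade to $\gg |A'|\,|A|$ is left as a sketch whose crucial step --- making the pair (length, left endpoint) of $(a'+a_i-a_j,\ a'+a_{i+1}-a_j]$ determine $(a',i,j)$ --- you explicitly flag as an unresolved obstacle; since $a'-a_j$ is in general not uniquely represented, the distinctness of that two-parameter family genuinely fails, and no concrete restriction of $j$ or weighting scheme is supplied that repairs it. So \eqref{33} is not proved. The paper's argument is, by contrast, a short direct count that uses convexity (not merely distinct consecutive differences): for each $j$ the $j$ numbers $a_j+(a_{i+1}-a_i)$, $1\le i\le j$, are pairwise distinct (distinct differences) and all lie in $(a_j,a_{j+1}]$ because $a_{i+1}-a_i\le a_{j+1}-a_j$; the intervals $(a_j,a_{j+1}]$ are disjoint, so $|A+A-A|\ge\sum_{j=1}^{k-1} j\gg |A|^2$, and running the same count anchored at the elements of $A'$ handles the subset version needed below. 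No ``good interval''/block-counting machinery from Theorem~\ref{main} is required, and importing it only costs you strength.

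The deduction of $|A-A|\,|A+A|\gg|A|^3$ is also broken. You need $|A|\,|A+A-A|\le |A+A|\,|A-A|$, but the substitution you offer --- $|X||Y+Z|\le|X+Y||X+Z|$ with $X=A$, $Y=A$, $Z=-A$ --- reads $|A|\,|A-A|\le|A+A|\,|A-A|$, which is trivial and says nothing about $|A+A-A|$; no instantiation of that inequality yields the bound you want without a triple sum appearing on the right. Indeed you first (correctly) doubt the full-set inequality $|A+A-A|\le |A+A||A-A|/|A|$ and then invoke exactly that inequality; such a bound for the full set is precisely what is not available, and this is the whole reason the proposition is stated for a subset $A'\subset A$. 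The paper instead applies Katz--Shen \cite[Corollary 1.5]{KS}, which produces $A'\subset A$ with $|A'|\ge|A|/2$ and $|A'+A-A|\ll |A+A|\,|A-A|/|A|$; combining this with \eqref{33} applied to that $A'$ (where $|A'|\,|A|\gg|A|^2$) gives $|A+A|\,|A-A|\gg|A|^3$. Your plan, which sets $A'=A$ and leans on a nonexistent full-set triangle inequality, cannot be patched without reintroducing exactly this subset step.
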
 
Note that \eqref{33} is best possible, as is seen from Example~\ref{squares}. 
\begin{proof}
We let 
$$A = \{a_1 < \cdots < a_k\}.$$
We prove the first statement in the case $A' = A$ and the general case follows similarly. Let $1 \leq j \leq k-1$. Then the $j$ elements
$$a_j + a_2 - a_1 < \cdots <  a_j + a_{j+1} - a_j,$$ all lie in the interval $(a_j , a_{j+1}]$. Thus $$|A+A -A| \geq \sum_{j=1}^{k-1} j \gg k^2.$$ 
For the second statement, by \cite[Corollary 1.5]{KS}, there is a set $A' \subset A$ such that $|A'| \geq |A|/2$ and  $$|A' + A -A| \ll \left( \frac{|A+A| |A-A|}{|A|} \right) ,$$ and the result follows now from \eqref{33}. \qed
\end{proof}

While the argument is simple, it is not robust. For instance, we cannot prove a statistical analog of \eqref{33}. The proof of Proposition~\ref{conv1} can be modified to handle the case where $A$ is only assumed to have distinct consecutive differences, already hinting at Theorem~\ref{main}.

\section{Difference Sets of Convex Sets} 

In this section we prove Schoen and Shkredov's \cite{SchSh} bound for difference sets of convex sets, slightly modifying some details. We choose to work with difference sets, as there are additional technicalities for sumsets. We say $b \gtrsim a$ if $a = O(b \log^c |A|)$ for some $c >0$. 

\begin{theorem}[\cite{SchSh}]\label{SS} Let $A$ be a convex set. Then 
$$|A-A| \gtrsim |A|^{8/5} $$
\end{theorem}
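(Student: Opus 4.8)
\textbf{Proof plan for Theorem~\ref{SS}.}
The plan is to follow the Schoen--Shkredov strategy, which combines a second-moment (energy) analysis with the Szemer\'edi--Trotter incidence bound, packaged through the higher-energy quantities of a convex set. Write $A=\{a_1<\cdots<a_k\}$, $k=|A|$, and let $d(x) = |\{(a,a')\in A\times A : a-a'=x\}|$ be the representation function of the difference set, so that $\sum_x d(x) = k^2$ and, by Cauchy--Schwarz, $|A-A|\ge k^4/E(A)$ where $E(A)=\sum_x d(x)^2$ is the additive energy. Thus it suffices to show $E(A)\lesssim k^{12/5}$. The first step is to record the convexity input in usable form: for a convex set, the map $a_i\mapsto a_{i+1}-a_i$ is injective (this is exactly the hypothesis of Theorem~\ref{main}, and more), and more generally convexity controls how many pairs can share a common difference \emph{and} sit in a common short window --- this is the mechanism already used in Proposition~\ref{conv1}, where $|A'+A-A|\gg|A'||A|$ for every $A'\subseteq A$.

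Next I would bound $E(A)$ dyadically. Partition the differences by the size of $d$: let $P_s = \{x : 2^{s-1} < d(x) \le 2^s\}$ for $s\ge 1$, so $E(A) \ll \sum_s 2^{2s}|P_s|$ and $2^s |P_s| \ll k^2$ for each $s$. The crux is a second estimate on $|P_s|$ coming from incidences. For $x\in P_s$ there are $\gtrsim 2^s$ pairs $(a_i,a_j)$ with $a_i - a_j = x$; using convexity one converts a collection of such pairs, across many popular $x$, into a point set and a family of lines (or curves) in the plane whose incidences are governed by Szemer\'edi--Trotter. Concretely, one considers the set $A\times A$ and the ``difference lines'' $\{(u,v): u - v = x\}$ for $x \in P_s$; the convexity of $A$ forces these to behave like genuinely distinct directions, so the number of incidences between $A\times A$ and these lines is $O\big((k^2)^{2/3}|P_s|^{2/3} + k^2 + |P_s|\big)$. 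Since each popular $x$ carries $\gtrsim 2^s$ incidences, $2^s |P_s| \lesssim (k^2)^{2/3}|P_s|^{2/3} + k^2 + |P_s|$, which yields $|P_s| \lesssim k^4 2^{-3s}$ (the first term dominating in the relevant range). Combining $|P_s|\ll \min\{k^2 2^{-s},\, k^4 2^{-3s}\}$ and summing $2^{2s}|P_s|$ over dyadic $s$: the two bounds cross near $2^s \sim k$, and the sum is dominated there, giving $E(A)\lesssim 2^{2s}\cdot k^2 2^{-s}\big|_{2^s\sim k^{?}}$ --- optimizing, one finds $E(A)\lesssim k^{12/5}$, whence $|A-A|\gtrsim k^4/k^{12/5} = k^{8/5}$.

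I expect the main obstacle to be the second step: extracting a clean Szemer\'edi--Trotter estimate for $|P_s|$ that genuinely uses convexity rather than just the distinct-consecutive-differences hypothesis. The naive incidence setup (points $A\times A$, lines of slope $1$) does not by itself see convexity; Schoen and Shkredov instead exploit that a convex set has small \emph{third} energy, or equivalently that the ``popular difference'' sets $\{a\in A : a+x\in A, a+y\in A\}$ are small on average, via a bound of the shape $\sum_{x,y} |A\cap(A+x)\cap(A+y)|^2 \lesssim \dots$, and it is the translation of convexity into such a cubic-energy bound --- combined with Katz--Koester-type set inclusions and one application of Szemer\'edi--Trotter to the resulting point-line configuration --- that does the real work. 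Carrying out that translation carefully, tracking the logarithmic factors absorbed by $\gtrsim$, and checking that the dyadic sum is indeed dominated at the crossover point $2^s \sim k^{4/5}$ (rather than at the endpoints), are the places where the argument needs genuine care; the rest is the routine Cauchy--Schwarz bookkeeping indicated above.
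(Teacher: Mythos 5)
There is a genuine gap, and it sits exactly where the $8/5$ exponent has to come from. Your plan reduces the theorem to the claim $E(A)=E_2(A)\lesssim k^{12/5}$ via $|A-A|\ge k^4/E_2(A)$. But the inputs you invoke cannot deliver that bound, and indeed your own dyadic computation does not. The incidence setup you write down (points $A\times A$, slope-one lines $u-v=x$) gives $|P_s|\lesssim k^4 2^{-3s}$, which never beats the trivial bound $|P_s|\lesssim k^2 2^{-s}$ in the admissible range $2^s\le k$, so it yields only $E_2(A)\lesssim k^3$, i.e.\ nothing; as you note, this setup does not see convexity at all. The genuine convexity input is the third-energy bound $E_3(A,B)\lesssim |A||B|^2$ (the paper's \eqref{E3}), which for $B=A$ gives $|P_s|\lesssim k^3 2^{-3s}$; feeding that into your dyadic sum, the crossover is at $2^s\sim k^{1/2}$ and one gets $E_2(A)\lesssim k^{5/2}$, hence only $|A-A|\gtrsim k^{3/2}$ --- the Elekes--Nathanson--Ruzsa exponent, not $8/5$. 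The claimed optimization ``$E(A)\lesssim k^{12/5}$ at $2^s\sim k^{4/5}$'' is not consistent with the bounds you display, and no bound of the form $E_2(A)\lesssim k^{12/5}$ is available from these tools (nor is it how Schoen--Shkredov argue); so the route through the second energy alone is a dead end for $8/5$.

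The missing idea is that one must apply the convexity bound $E_3(A,B)\lesssim |A||B|^2$ with $B=D:=A-A$, not merely with $B=A$. The paper's proof sets $K=|A-A|/|A|$, extracts a popular dyadic level $P$ of $r_{A-A}$ with $\Delta\gtrsim |A|/K$, forms the graph $G=\{(a,b)\in A^2:\,a-b\in P\}$ on $A\times A$, and uses Cauchy--Schwarz together with $E_3(A)=E_2(\Delta(A),A\times A)\lesssim |A|^3$ to show $|\Delta(A)-A\times_G A|\gtrsim |A|^3$. Every element $(u,v)$ of that difference set satisfies $r_{A-A}(u-v)\ge\Delta\gtrsim |A|/K$, which gives $|A|^4K^{-1}\lesssim E_2(A,D)$; then a second Cauchy--Schwarz, $E_2(A,D)\le E_3(A,D)^{1/2}|A|^{1/2}|D|^{1/2}$, combined with $E_3(A,D)\lesssim |A||D|^2$, produces $|A|^4K^{-1}\lesssim |A|^{5/2}K^{3/2}$ and hence $K\gtrsim |A|^{3/5}$. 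Note the paper explicitly remarks that the only place \eqref{E3} is used with $B\neq\pm A$ is this last step; that asymmetric use of the third energy against the difference set itself is precisely what pushes the exponent past $3/2$, and it is absent from your proposal. Your closing paragraph correctly identifies third-energy/Katz--Koester-type ideas as relevant, but the concrete reduction you commit to (bounding $E_2(A)$ and dividing) cannot reach the stated theorem.
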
 

Before beginning the proof, we recall the $k^{\rm th}$ order energy of sets $A$ and $B$ is defined as 
$$E_k(A,B) : = \sum_{x} r_{A-B}(x)^k,$$ where 
$$r_{A-B} (x) = \#\{(a,b) \in A \times B : x = a-b\}.$$ 
We set $E_k(A) : = E_k(A,A)$. Using Szemer\'edi-Trotter, it was shown in \cite{SchSh}, building upon the main idea of \cite{ENR}, that if $A$ is convex then
\begin{equation}\label{E3} E_3(A,B) \lesssim |A| |B|^2.\end{equation}
Note that \eqref{E3} is not true if we merely assume that $A$ has distinct consecutive differences as the following example demonstrates.

\begin{example}
Let $k$ be a positive integer (divisible by 10) and 
$$A = d \cdot [k/10] \cup d' \cdot [k/10].$$ 
For appropriately chosen $d$ and $d'$ (i.e. $d = k$ and $d' = k+1$), we have that $A$ has distinct consecutive differences. On the other hand $$E_3(A) \geq E_3([k]) \gg |A|^4.$$
\end{example}

\begin{proof}[Theorem~\ref{SS}] We set $K = |A-A| |A|^{-1}$. By \eqref{E3}, we have 
\begin{equation}\label{E32} E_3(A) \lesssim |A|^3.\end{equation} 
On the other hand, $E_3(A)$ is the number of solutions to 
\begin{equation}\label{eq} a-b = c-d = e-f , \ \ \ a,b,c,d,e,f \in A. \end{equation}
We let 
$$\Delta(A) = \{(a,a) : a \in A\} \subset A \times A.$$ 
Then \eqref{eq} implies
\begin{equation}\label{same} E_2(\Delta(A) , A \times A) = E_3(A).\end{equation}
By a dyadic decomposition there is a $\Delta \geq 1$ such that 
$$|P| \Delta \gtrsim |A|^2, \ \ \ P : = \{x : \Delta \leq r_{A-A}(x) \leq 2 \Delta\}.$$
We define a graph $G = G(\Delta)$ on $A \times A$ such that the edges are 
$$G = \{(a,b) \in A^2 : a - b \in P\}.$$
Then it follows that 
\begin{equation}\label{delta}|G| \gtrsim |A|^2 \ \ \ \Delta \gtrsim \frac{|A|}{K}. \end{equation}
By Cauchy-Schwarz,
$$|A|^6 \lesssim \left(\sum_{x,y} r_{\Delta(A)  -A \times_G A}(x,y) \right)^2 \leq E_2(\Delta(A) , A \times A) |\Delta(A) -(A \times_G A)|.$$
Thus by \eqref{same} and \eqref{E32}, we find that 
\begin{equation}\label{low} |\Delta(A) - A \times_G A| \gtrsim |A|^3.\end{equation}
Now given $(u,v) \in \Delta(A) - A \times_G A$, we write 
$$(u,v) = (c,c) - (a,b), \ \ \ a,b,c \in A, \ \ (a,b) \in G$$
and it follows that 
$$u-v = a-b.$$
But by the definition of $G$, 
$$r_{A-A}(u-v) = r_{A-A}(a-b) \geq \Delta.$$
Combining with \eqref{delta}, we conclude for each $(u,v) \in \Delta(A) - A \times_G A$,
$$r_{A-A}(u-v) \gtrsim |A|K^{-1},$$
 and so by \eqref{low}, 
\begin{equation}\label{AD} |A|^4 K^{-1} \lesssim \sum_{u,v \in \Delta(A) - A \times_G A} r_{A-A} (u-v) .\end{equation}
We set $D = A-A$ and since $u,v \in D$, we find 
 $$\frac{|A|^4}{K}  \lesssim E_2(A,D) \leq E_3(A,D)^{1/2} |A|^{1/2} |D|^{1/2}.$$
Applying \eqref{E3} to the right hand size, we conclude
\begin{equation}\label{last} \frac{|A|^4}{K}  \lesssim |A|^{5/2} K^{3/2}.\end{equation}
Theorem~\ref{SS} now follows from simplification. 
\qed 
\end{proof}

It is only in \eqref{last} of Theorem~\ref{SS} that we utilize \eqref{E3} for a set $B \neq \pm A$.

\begin{acknowledgement}
Research was supported by the OTKA K 119528 grant. The work of the third author has received funding from the European Research Council (ERC) under the European Unions Horizon 2020 research and innovation programme (grant agreement No 741420, 617747, 648017). His research is also
supported an NSERC grant. The second author is supported by Ben Green's Simons Investigator Grant 376201.The authors thank Mel Nathanson for his help on writing this paper.
\end{acknowledgement}
%

%
%
%

\end{document}